\newtheorem{lemma}{Lemma}
\newtheorem{theorem}{Theorem}
\newtheorem{corollary}{Corollary}
\newtheorem{proposition}{Proposition}
\theoremstyle{definition}
\DeclareMathOperator{\ch}{ch}
\DeclareMathOperator{\sh}{sh}
\DeclareMathOperator{\csch}{csch}
\begin{document}

\markboth{N.~Abrosimov \& B.~Vuong}
{Explicit volume formula for a hyperbolic tetrahedron in terms of edge lengths}

%%%%%%%%%%%%%%%%%%%%% Publisher's Area please ignore %%%%%%%%%%%%%%
%\catchline{}{}{}{}{}
%%%%%%%%%%%%%%%%%%%%%%%%%%%%%%%%%%%%%%%%%%%%%%%%%%%%%%%%%%%%%%%%%%%

\title{Explicit volume formula\\ for a hyperbolic tetrahedron\\ in terms of edge lengths}

\author{Nikolay Abrosimov\footnote{Corresponding author.}}

%\address{Regional Scientific and Educational Mathematical Center, \\
%Tomsk State University, Tomsk, 634050, Russia\\[4pt]
%Sobolev Institute of Mathematics, \\
%Novosibirsk, 630090, Russia\\[4pt]
%Novosibirsk State University,
%Novosibirsk, 630090, Russia\\
%abrosimov@math.nsc.ru}

\author{Nikolay Abrosimov, Bao Vuong}

%\address{Sobolev Institute of Mathematics, \\
%Novosibirsk, 630090, Russia\\[4pt]
%Novosibirsk State University,
%Novosibirsk, 630090, Russia\\
%vuonghuubao@live.com}

\maketitle

\begin{abstract}
We consider a compact hyperbolic tetrahedron of a general type. It is a convex hull of four points called vertices in the hyperbolic space $\mathbb{H}^3$. It can be determined by the set of six edge lengths up to isometry. For further considerations, we use the notion of edge matrix of the tetrahedron formed by hyperbolic cosines of its edge lengths. 

We establish necessary and sufficient conditions for the existence of a tetrahedron in $\mathbb{H}^3$. Then we find relations between their dihedral angles and edge lengths in the form of a cosine rule. Finally, we obtain exact integral formula expressing the volume of a hyperbolic tetrahedron in terms of the edge lengths. The latter volume formula can be regarded as a new version of classical Sforza's formula for the volume of a tetrahedron but in terms of the edge matrix instead of the Gram matrix.
\end{abstract}

%\keywords{Compact hyperbolic antiprism; hyperbolic volume; rotation followed by reflection; symmetry group $S_{2n}$.}

%\ccode{Mathematics Subject Classification 2000: 52B15, 51M20, 51M25, 51M10}

\section{Introduction}
A {\em hyperbolic tetrahedron} $T$ is a convex hull of four points in the hyperbolic space $\mathbb{H}^3$. These points are called {\em vertices} of $T$. Let us denote them by numbers $1, 2, 3$ and $4$ (see Fig.~1). Then denote by $\ell_{ij}$ the length of the edge connecting $i$-th and $j$-th vertices. We put $\theta_{ij}$ for the dihedral angle along the corresponding edge.
\begin{figure}[th]
	\centerline{\psfig{file=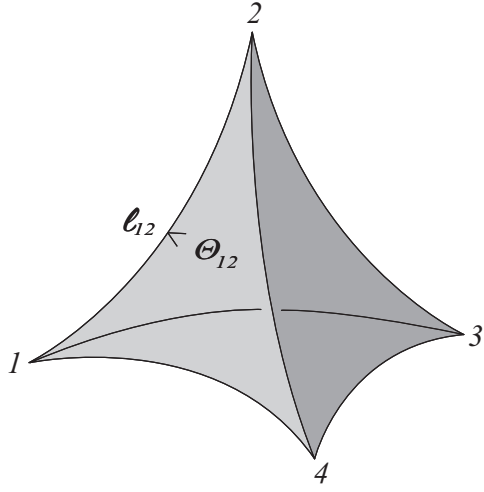,width=2.5in}}
	%\vspace*{8pt}
	\caption{Hyperbolic tetrahedron $T$.\label{fig1}}
\end{figure}

A {\em Gram matrix} $G(T)$ of tetrahedron $T$ is defined as $G(T)=$
$$
\langle-\cos\theta_{ij}\rangle_{i,j=1,2,3,4}=
\left(%
\begin{array}{cccc}
1 & -\cos\theta_{12} & -\cos\theta_{13} & -\cos\theta_{14} \\
-\cos\theta_{12} & 1 & -\cos\theta_{23} & -\cos\theta_{24} \\
-\cos\theta_{13} & -\cos\theta_{23} & 1 & -\cos\theta_{34} \\
-\cos\theta_{14} & -\cos\theta_{24} & -\cos\theta_{34} & 1 \\
\end{array}%
\right),
$$ 
we assume here that $-\cos\theta_{ii}=1$.

An {\em Edge matrix} $E(T)$ is formed by hyperbolic cosines of the edge lengths and defined as follows
$$E(T)=\langle\ch\ell_{ij}\rangle_{i,j=1,2,3,4}=
\left(%
\begin{array}{cccc}
1 & \ch\ell_{12} & \ch\ell_{13} & \ch\ell_{14} \\
\ch\ell_{12} & 1 & \ch\ell_{23} & \ch\ell_{24} \\
\ch\ell_{13} & \ch\ell_{23} & 1 & \ch\ell_{34} \\
\ch\ell_{14} & \ch\ell_{24} & \ch\ell_{34} & 1 \\
\end{array}%
\right),
$$ 
where $\ell_{ii}=0$ and $\ch\ell_{ii}=1$.

It is known that a hyperbolic tetrahedron $T$ can be uniquely determined up to isometry either by the Gram matrix $G(T)$ or the edge matrix $E(T)$ (see, e.g., \cite{Vinberg}). This is unlikely to Euclidean case, where the edge matrix defines a tetrahedron up to isometry, but the Gram matrix defines a tetrahedron only up to similarity. The notion of similarity has no place in the hyperbolic geometry.

A volume formula for arbitrary hyperbolic tetrahedron has been unknown until recently. A general algorithm for obtaining such a formula was indicated by W.–-Yi.~Hsiang in \cite{Hsiang}. A complete solution of the problem was given by Yu.~Cho and H.~Kim \cite{Cho-Kim}. However, the proposed formula was asymmetric with respect to permutation of angles. J.~Murakami, M.~Yano \cite{Mur-Yan} obtained a formula expressing the volume by dihedral angles in a symmetric way. A.~Ushijima \cite{Ush} presented a simple proof of the Murakami–-Yano formula. He also investigated the case of a truncated hyperbolic tetrahedron. In all these studies the volume is expressed as a linear combination of 16 dilogarithms or Lobachevsky functions. The arguments of these functions depend on the dihedral angles of the tetrahedron and some additional parameter, which is the root of some quadratic equation with complex coefficients. In 2005, D.~A.~Derevnin and A.~D.~Mednykh \cite{Der-Med} presented an integral formula in terms of dihedral angles. 

Surprisingly, but more than 100 years before, in 1907, G.~Sforza \cite{Sforza} found another closed integral formula for the volume of a hyperbolic tetrahedron.

\begin{theorem}[G.~Sforza, 1907]
	Let $T$ be a compact hyperbolic tetrahedron given by the Gram matrix $G=G(T)$. We assume that all the dihedral angles are fixed exept $\theta_{34}$ which is formal variable. Then the volume $V=V(T)$ is given by the formula
	$$
	Vol\,(T)=\frac{1}{4}\int\limits_{t_0}^{\theta_{34}}\log\frac{c_{34}(t)-\sqrt{-\det\,G(t)}\sin t}{c_{34}(t)+\sqrt{-\det\,G(t)}\sin t}dt,
	$$
	where $t_0$ is a suitable root of the equation $\det G(t) = 0$, $c_{34}$ is
	$(3, 4)$-cofactor of the matrix $G$, and $c_{34}(t), G(t)$ are functions in one variable $\theta_{34}$ denoted by $t$.
\end{theorem}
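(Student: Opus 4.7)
The plan is to combine Schl\"afli's variational formula with an explicit expression for the edge length $\ell_{34}$ as a function of the varying angle $t=\theta_{34}$, and then to integrate. Holding the other five dihedral angles fixed, Schl\"afli's differential formula for hyperbolic $3$-volume gives $dV/dt=-\ell_{34}(t)/2$, so the theorem reduces to establishing the pointwise identity
\[
\ell_{34}(t)\;=\;\tfrac{1}{2}\log\frac{c_{34}(t)+\sqrt{-\det G(t)}\,\sin t}{c_{34}(t)-\sqrt{-\det G(t)}\,\sin t}
\]
and then antidifferentiating from the degenerate boundary $t_{0}$ to $\theta_{34}$, flipping the ratio inside the logarithm to absorb the minus sign.

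I would derive this identity from two classical ingredients. The first is the cosine-type rule for an edge of a hyperbolic tetrahedron in terms of Gram cofactors,
\[
\cosh\ell_{34}\;=\;\frac{c_{34}(t)}{\sqrt{c_{33}(t)\,c_{44}(t)}},
\]
which follows from the duality in Minkowski space $\mathbb{R}^{3,1}$ between outward face normals (whose Gram matrix is $G$) and the vertex position vectors (essentially the rows of the adjugate of $G$, properly normalized so as to land on the hyperboloid). The second is Jacobi's identity for the $2\times 2$ complementary minor of the adjugate,
\[
c_{33}(t)\,c_{44}(t)-c_{34}(t)^{2}\;=\;\det G(t)\cdot\det G_{\{1,2\},\{1,2\}},
\]
which, in the paper's Gram-matrix convention, produces a complementary $2\times2$ minor equal to $\sin^{2}t$, whence $\sinh\ell_{34}=\sqrt{-\det G(t)}\,\sin t/\sqrt{c_{33}(t)\,c_{44}(t)}$. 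Plugging these two expressions into $e^{\pm\ell_{34}}=\cosh\ell_{34}\pm\sinh\ell_{34}$ gives $e^{2\ell_{34}}=(c_{34}+\sqrt{-\det G}\sin t)/(c_{34}-\sqrt{-\det G}\sin t)$, which is precisely the logarithmic identity above.

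It remains to pin down the constant of integration and perform the antidifferentiation. Since $\det G(t)$ is a quadratic polynomial in $\cos t$, the equation $\det G(t)=0$ has at most two roots, and at either root the Gram matrix becomes singular and the tetrahedron degenerates to a figure of zero volume. I would take $t_{0}$ to be the root for which $-\det G(t)>0$ throughout the open interval strictly between $t_{0}$ and $\theta_{34}$ and for which the tetrahedron stays compact on that interval, so that $V(t_{0})=0$; integrating $dV/dt=-\ell_{34}(t)/2$ against this boundary condition then produces the stated formula. The main technical obstacle is the edge-length identity in the second paragraph: all the cofactor, minor, and sign bookkeeping has to be executed in exactly the paper's Gram-matrix convention, so that Jacobi's identity genuinely delivers $\sin^{2}\theta_{34}$ rather than the square of some other dihedral sine. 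A secondary difficulty is to justify, independently of any closed-form expression for $V$, that the distinguished root $t_{0}$ is really a limit of compact hyperbolic tetrahedra whose volumes tend to zero.
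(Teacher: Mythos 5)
The paper does not actually prove this statement: Sforza's formula is quoted in the introduction as a classical result, and the authors instead prove its edge--length analogue (Theorem~\ref{vol}) by exactly the template you describe --- Schl\"afli's variational formula, a cosine rule expressed through cofactors, and Jacobi's identity. So your skeleton is the standard modern derivation (the one used by Derevnin--Mednykh and by Abrosimov--Mednykh) and is sound: $\partial V/\partial t=-\ell/2$ for the edge carrying the varying dihedral angle, the cofactor formulas for $\cosh\ell$ and $\sinh\ell$, hence $e^{-2\ell}$ as the ratio in the logarithm, and integration from a degenerate configuration of zero volume.

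The one point you flag but do not resolve --- the index bookkeeping --- is exactly where the care is needed, so let me resolve it. With the Gram matrix as displayed in this paper, where $-\cos\theta_{ij}$ occupies position $(i,j)$ and $\theta_{ij}$ is the dihedral angle along the edge joining vertices $i$ and $j$, the cofactor pair encodes the \emph{opposite} edge: the duality with the vertex vectors gives $\cosh\ell_{5-i,5-j}=c_{ij}/\sqrt{c_{ii}c_{jj}}$, in precise analogy with the paper's own Theorem~\ref{edgematrix}(iii) for the edge matrix, so $c_{34}$ controls $\cosh\ell_{12}$, not $\cosh\ell_{34}$. Correspondingly, Jacobi's identity yields $c_{33}c_{44}-c_{34}^2=\det G\,(1-\cos^2\theta_{12})=\det G\,\sin^2\theta_{12}$, and the clean identity is
\[
-\frac{\ell_{12}}{2}=\frac14\log\frac{c_{34}-\sqrt{-\det G}\,\sin\theta_{12}}{c_{34}+\sqrt{-\det G}\,\sin\theta_{12}},
\]
so the integration variable paired with $c_{34}$ is $\theta_{12}$. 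Your identity $\cosh\ell_{34}=c_{34}/\sqrt{c_{33}c_{44}}$, and the theorem as printed, are instead correct in the face--normal convention in which the angle stored at Gram position $(1,2)$ is the dihedral angle along the edge $v_3v_4$; you should commit explicitly to that convention rather than to ``the paper's convention,'' under which your claimed complementary minor would come out as $\sin^2$ of the opposite dihedral angle and the two halves of your argument would not match. Once the convention is fixed consistently, the remaining issue you raise --- that $V\to 0$ as $t\to t_0$ --- is handled exactly as in the paper's Theorem~\ref{exist} and the end of the proof of Theorem~\ref{vol}: the endpoint of the deformation is a flat configuration reached through compact tetrahedra, and continuity of the volume closes the argument.
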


In all the above mentioned formulas, the volume is given in terms of dihedral angles. 

In the paper by J.~Murakami and A.~Ushijima \cite{Mur-Ush} one can find a formula that express the volume of a hyperbolic tetrahedron in terms of edge lengths. However, the formula contains derivatives of implicit functions involving dilogarithms.

The natural question arises: {\it Can we find an analog of the Sforza's formula, but in terms of the edge matrix?} This would be the first known explicit formula for the volume of an arbitrary hyperbolic tetrahedron in terms of its edge lengths. 

In the present work we consider a general case of a compact tetrahedron $T$ in $\mathbb{H}^3$, given by its edge matrix. We establish necessary and sufficient conditions for the existence of a tetrahedron in $\mathbb{H}^3$ in terms of its edge lengths. Then we find relations between their dihedral angles and edge lengths in the form of a cosine rule. Finally, we obtain explicit integral formula for the volume of $T$ in terms of the edge matrix.

\section{Existence criterion of a tetrahedron $T$ in $\mathbb{H}^3$}
The following theorem gives a criterion for the existence of a compact hyperbolic tetrahedron in terms of its edge lengths.
\begin{theorem}\label{exist}
	A compact hyperbolic tetrahedron $T$ with edge matrix $E$ is exist if and only if the following inequalities hold \vspace{-10pt}
	\begin{eqnarray}\label{exi}
	\text{\upshape(i)} &  & \ell_{13}+\ell_{23}\geq \ell_{12}\geq|\ell_{13}-\ell_{23}|, \nonumber
	\\ \text{\upshape(ii)} &  & \ell_{14}+\ell_{24}\geq \ell_{12}\geq|\ell_{14}-\ell_{24}|, \nonumber
	\\ \text{\upshape(iii)} &  & \ell_{1}\leq \ell_{34}\leq \ell_{2},\quad where\quad\ch\ell_{1}=C-S, \quad\ch\ell_{2}=C+S\quad and \nonumber
	\\&  & C=\ch \ell_{13}\ch \ell_{14}-\csch^2\ell_{12}(\ch \ell_{13}\ch \ell_{12}-\ch \ell_{23})(\ch \ell_{14}\ch \ell_{12}-\ch \ell_{24}), \nonumber
	\\&  & S=\csch^2\ell_{12}\sqrt{(\ch\ell_{23}-\ch(\ell_{13}+\ell_{12}))(\ch\ell_{23}-\ch(\ell_{13}-\ell_{12}))} \nonumber
	\\&  & \quad\quad\times\sqrt{(\ch \ell_{24}-\ch(\ell_{14}+\ell_{12}))(\ch \ell_{24}-\ch(\ell_{14}-\ell_{12}))} \nonumber
	\end{eqnarray}
\end{theorem}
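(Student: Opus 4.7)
My plan is to work in the hyperboloid model $\mathbb{H}^3 \subset \mathbb{R}^{1,3}$, where the vertices become vectors $v_1, \dots, v_4$ with $\langle v_i, v_i \rangle = 1$ and $\ch \ell_{ij} = \langle v_i, v_j \rangle$, and to realize the five-edge data $(\ell_{12}, \ell_{13}, \ell_{14}, \ell_{23}, \ell_{24})$ explicitly in coordinates so that $\ch \ell_{34}$ becomes a concrete function of the dihedral angle $\theta_{12}$ along the edge joining vertices $1$ and $2$. Conditions (i) and (ii) then come out directly as the triangle inequalities for the two adjacent faces $123$ and $124$, while (iii) will describe the range of $\ch \ell_{34}$ as $\theta_{12}$ sweeps $[0,\pi]$.

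Concretely, set $v_1 = (1,0,0,0)$ and $v_2 = (\ch\ell_{12}, \sh\ell_{12}, 0, 0)$. The linear system $\langle v_1, v_3\rangle = \ch\ell_{13}$, $\langle v_2, v_3\rangle = \ch\ell_{23}$, together with $\langle v_3, v_3\rangle = 1$, places $v_3$ in the half-plane $\{y \geq 0,\, z = 0\}$ as $v_3 = (\ch\ell_{13}, a, b, 0)$ with $a = \csch\ell_{12}\,(\ch\ell_{12}\ch\ell_{13} - \ch\ell_{23})$ and $b^2 = \sh^2\ell_{13} - a^2$. The identity $\ch(\alpha \pm \beta) = \ch\alpha\,\ch\beta \pm \sh\alpha\,\sh\beta$ factors $\sh^2\ell_{12}\cdot b^2$ into $(\ch(\ell_{12}+\ell_{13}) - \ch\ell_{23})(\ch\ell_{23} - \ch(\ell_{12}-\ell_{13}))$, which is non-negative exactly when (i) holds. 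Placing $v_4 = (\ch\ell_{14}, a', b'\cos\theta_{12}, b'\sin\theta_{12})$ in the half-plane making angle $\theta_{12}$ with that of $v_3$ similarly yields (ii).

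Expanding $\ch\ell_{34} = \langle v_3, v_4\rangle$ now gives the master identity
\[
\ch\ell_{34} \;=\; \ch\ell_{13}\ch\ell_{14} - aa' - bb'\cos\theta_{12} \;=\; C - bb'\cos\theta_{12},
\]
where $C$ is exactly the expression stated in the theorem and $bb'$ reproduces $S$ once the factorizations of $b^2$ and $b'^2$ obtained above are substituted. As $\theta_{12}$ varies over $[0,\pi]$, the right-hand side sweeps the closed interval $[C - bb',\, C + bb'] = [\ch\ell_1,\, \ch\ell_2]$ monotonically. This settles both directions at once: necessity, since any genuine tetrahedron has $\theta_{12}\in[0,\pi]$ and its faces $123, 124$ must satisfy the triangle inequalities; sufficiency, since given (i)--(iii) one can solve for a unique $\cos\theta_{12}\in[-1,1]$ and then assemble $T$ by gluing the two triangles along edge $\ell_{12}$ at that dihedral angle.

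The main technical obstacle I anticipate is purely bookkeeping on signs when identifying $bb'$ with $S$: the factorization naturally produces four factors of the form $\ch(\ell_{1i} \pm \ell_{12}) - \ch\ell_{2i}$ for $i=3,4$, which under the triangle inequalities split into pairs of opposite sign, so one must reassemble them into the radicand of $S$ correctly (possibly up to a sign convention in the statement). One should also verify that the endpoints $\theta_{12}\in\{0,\pi\}$ correspond precisely to flat, degenerate configurations, so that the non-strict inequalities in (iii) really delineate the compact, three-dimensional tetrahedron regime.
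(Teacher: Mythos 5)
Your proposal is correct and follows essentially the same route as the paper: both hinge the two triangular faces $1$--$2$--$3$ and $1$--$2$--$4$ along the common edge $1$--$2$, let the dihedral angle $\theta_{12}$ sweep $[0,\pi]$, and observe that $\ch\ell_{34}$ then ranges over $[C-S,\,C+S]$; the paper carries out the computation via the hyperbolic law of cosines at the two flat endpoint configurations $\theta_{12}=0,\pi$, whereas you obtain the full affine identity $\ch\ell_{34}=C-S\cos\theta_{12}$ from explicit hyperboloid-model coordinates. Your version is, if anything, slightly more complete, since the explicit dependence on $\cos\theta_{12}$ makes the sufficiency direction (every value in $[\ell_1,\ell_2]$ is actually attained by a genuine tetrahedron) and the sign normalization of the radicands in $S$ explicit, both of which the paper's proof leaves implicit.
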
	
\begin{proof}
	Without loss of generality we assume that length $\ell_{12}$ is equal to some finite positive number,  $0<\ell_{12}<\infty$. Then inequalities (i) and (ii) are necessary and sufficient for the existence of hyperbolic triangles with side lengths $\ell_{12}, \ell_{13}, \ell_{23}$ and $\ell_{12}, \ell_{14}, \ell_{24}$ correspondingly. 
	
	We construct a tetrahedron in $\mathbb{H}^3$ with two faces $1-2-3$ and $1-2-4$, adjacent along edge $1-2$. The two faces have edge lengths $\ell_{12}, \ell_{13}, \ell_{23}$ and $\ell_{12}, \ell_{14}, \ell_{24}$ (see Fig.~2). Such a tetrahedron is not unique since the distance between vertices $3$ and $4$ can be varied. By fixing the edge length $\ell_{34}$ we obtain a rigid tetrahedron.
	\begin{figure}[th]
		\centerline{\psfig{file=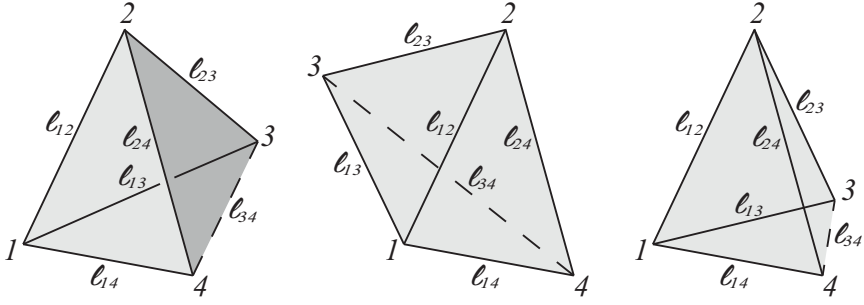,width=4.5in}}
		%\vspace*{8pt}
		\caption{Tetrahedra with fixed edge lengths exept $\ell_{34}$.\label{fig1}}
	\end{figure}

	We show that  the edge length $\ell_{34}$ satisfies double inequality (iii) $\ell_1\leq\ell_{34}\leq\ell_2$. Consider a flexible construction consisting of two rigid triangles $1-2-3$ and $1-2-4$ with variable dihedral angle $\theta_{12}$ along the common edge $1-2$ (see Fig.~2). The construction degenerates to a flat hyperbolic quadrilateral as the dihedral angle $\theta_{12}$ is either $0$ or $\pi$.  Then the angle between edges $3-1$ and $1-4$ is as follows
	\begin{align*}
	\angle314&=|\,\angle214-\angle213\,|\quad{\rm if}\quad\theta_{12}=0,\\
	\angle314&=\angle214+\angle213\,\,\;\;\quad{\rm if}\quad\theta_{12}=\pi.
	\end{align*}
	Hence
	\begin{equation}\begin{split}\label{eq21}
	\cos\angle314&=\cos\angle214\,\cos\angle213+\sin\angle214\,\sin\angle213\quad{\rm if}\quad\theta_{12}=0,\\
	\cos\angle314&=\cos\angle214\,\cos\angle213-\sin\angle214\,\sin\angle213\quad{\rm if}\quad\theta_{12}=\pi.
	\end{split}\end{equation}
	By hyperbolic law of cosines for triangles $1-2-3$ and $1-2-4$ we have
	\begin{equation}\label{eq22}
	\cos\angle213=\frac{\ch\ell_{13}\ch\ell_{12}-\ch\ell_{23}}{\sh\ell_{13}\sh\ell_{12}}
	\quad{\rm and}\quad \cos\angle214=\frac{\ch\ell_{14}\ch\ell_{12}-\ch\ell_{24}}{\sh\ell_{14}\sh\ell_{12}}.
	\end{equation}
	Since $\sin x=\sqrt{1-\cos^2 x}$ for $0\leq x\leq\pi$, from (\ref{eq22}) we get
	\begin{equation}\begin{split}\label{eq23}
	\sin\angle213&=\sqrt{\frac{\sh^2\ell_{13}\sh^2\ell_{12}-\ch^2\ell_{13}\ch^2\ell_{12}-\ch^2\ell_{23}+2\ch\ell_{13}\ch\ell_{12}\ch\ell_{23}}{\sh^2\ell_{13}\sh^2\ell_{12}}}=\\
	&\frac{\sqrt{(\ch(\ell_{13}+\ell_{12})-\ch\ell_{23})(\ch\ell_{23}-\ch(\ell_{13}-\ell_{12}))}}{\sh\ell_{13}\sh\ell_{12}},\\
	\sin\angle314&=\sqrt{\frac{\sh^2\ell_{14}\sh^2\ell_{12}-\ch^2\ell_{14}\ch^2\ell_{12}-\ch^2\ell_{24}+2\ch\ell_{14}\ch\ell_{12}\ch\ell_{24}}{\sh^2\ell_{14}\sh^2\ell_{12}}}=\\
	&\frac{\sqrt{(\ch(\ell_{14}+\ell_{12})-\ch\ell_{24})(\ch\ell_{24}-\ch(\ell_{14}-\ell_{12}))}}{\sh\ell_{14}\sh\ell_{12}}.
	\end{split}\end{equation}
	By hyperbolic law of cosines for triangle $1-3-4$ we have
	\begin{equation}\label{eq24}
	\ch\ell_{34}=\ch\ell_{13}\ch\ell_{14}-\sh\ell_{13}\sh\ell_{14}\cos\angle314
	\end{equation}
	
	We set $\ell_1=\ell_{34}$ for $\theta_{12}=0$ and $\ell_2=\ell_{34}$ for $\theta_{12}=\pi$. Then the inequality $\ell_1\leq\ell_{34}\leq\ell_2$ holds. Substituting expressions (\ref{eq22}) and (\ref{eq23}) into equation (\ref{eq21}), we get $\cos\angle314$ in both cases $\theta_{12}=0, \pi$. Then we put it in (\ref{eq24}) to obtain $\ch\ell_1$ and $\ch\ell_2$, namely 
	\begin{align*}
	\ch\ell_{1}=\;&C-S,\quad\ch \ell_{2}=C+S,\quad {\rm where}\\
	C=\;&\ch \ell_{13}\ch \ell_{14}-\csch^2\ell_{12}(\ch \ell_{13}\ch \ell_{12}-\ch \ell_{23})(\ch \ell_{14}\ch \ell_{12}-\ch \ell_{24}),\\
	S=\;&\csch^2\ell_{12}\sqrt{(\ch \ell_{23}-\ch(\ell_{13}+\ell_{12}))(\ch \ell_{23}-\ch(\ell_{13}-\ell_{12}))}\\
	&\times\sqrt{(\ch \ell_{24}-\ch(\ell_{14}+\ell_{12}))(\ch \ell_{24}-\ch(\ell_{14}-\ell_{12}))}
	\end{align*}
	
	%appropriate edge lengths. Denote its vertices by $1, 2, 3, 4$. Without loss of generality we assume that edge length $\ell_{12}$ is equal to some finite positive number,  $0<\ell_{12}<\infty$. Then inequalities (i) and (ii) are necessary and sufficient for the existence of hyperbolic triangles with side lengths $\ell_{12}, \ell_{13}, \ell_{23}$ and $\ell_{12}, \ell_{14}, \ell_{24}$ correspondingly. 
	
	%Up to isometry of $\mathbb{H}^3$ we can choose vertices $1, 2$ at some points on the distance $\ell_{12}$ between them. The vertices $3, 4$ can be choosen so that the triangles $1-2-3$ and $1-2-4$ have side lengths $\ell_{12}, \ell_{13}, \ell_{23}$ and $\ell_{12}, \ell_{14}, \ell_{24}$ correspondingly. Thus we get a flexible construction consisting of two rigid triangles $1-2-3$ and $1-2-4$ with variable dihedral along the common edge $1-2$ (see Fig.~2).
	
	%	Consider the following construction in $\mathbb{H}^3$. We take two appropriate triangles having a common edge (see Fig.~2).
	
	%Let us construct two apropriate triangles in $\mathbb{H}^3$
\end{proof}

\section{Properties of the edge matrix of a tetrahedron $T$ in $\mathbb{H}^3$}
For further consideration we will use the following known property of a quadratic matrix (see, e.g., \cite{Jacobi}).
\begin{theorem}[Jacobi equation]\label{Jac}
	Let $M=(a_{ij})_{i,j=1,\ldots,n}$ be an $n\times n$ matrix. Denote by $C=(c_{ij})_{i,j=1,\ldots,n}$
	the matrix of cofactors $c_{ij}=(-1)^{i+j}M_{ij},$ where
	$M_{ij}$ is $ij$-th minor of matrix $M$. Then
	$$\det\,(c_{ij})_{i,\,j=1,\ldots,k}=\det M^{k-1}\cdot
	\det\,(a_{ij})_{i,\,j=k+1,\ldots,n}.$$
\end{theorem}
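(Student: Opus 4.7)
The plan is to derive the identity from the fundamental adjugate relation $M \cdot \mathrm{adj}(M) = (\det M)\, I_n$, where $\mathrm{adj}(M) = C^T$ is the transpose of the cofactor matrix. First I would assume that $M$ is invertible and partition both $M$ and $\mathrm{adj}(M)$ conformably into $2 \times 2$ block form with the split after row and column $k$:
$$M = \begin{pmatrix} M_{11} & M_{12} \\ M_{21} & M_{22} \end{pmatrix}, \qquad \mathrm{adj}(M) = \begin{pmatrix} A_{11} & A_{12} \\ A_{21} & A_{22} \end{pmatrix},$$
so that $M_{11}, A_{11}$ are $k \times k$ and $M_{22}, A_{22}$ are $(n-k) \times (n-k)$. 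Reading off the adjugate relation block by block yields, in particular, $M_{11} A_{11} + M_{12} A_{21} = (\det M)\, I_k$ and $M_{21} A_{11} + M_{22} A_{21} = 0$.

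Next I would introduce the auxiliary block-triangular matrix
$$B = \begin{pmatrix} A_{11} & 0 \\ A_{21} & I_{n-k} \end{pmatrix}, \qquad \det B = \det A_{11},$$
and compute $M \cdot B$ directly. Using the two identities above, the product simplifies to
$$M \cdot B = \begin{pmatrix} (\det M)\, I_k & M_{12} \\ 0 & M_{22} \end{pmatrix},$$
which is block upper triangular. Taking determinants of both sides gives $(\det M)(\det A_{11}) = (\det M)^k \det M_{22}$, and dividing by $\det M$ yields $\det A_{11} = (\det M)^{k-1} \det M_{22}$. Since $A_{11}$ is the transpose of the top-left $k \times k$ block of $C$ and transposition preserves the determinant, this rewrites as $\det(c_{ij})_{i,j=1,\ldots,k} = (\det M)^{k-1} \det(a_{ij})_{i,j=k+1,\ldots,n}$, the claimed identity.

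Finally, to drop the invertibility assumption I would use a density argument: both sides are polynomial functions in the entries of $M$, they agree on the Zariski-open dense set of nonsingular matrices, and therefore they coincide on the full space of $n \times n$ matrices. I do not anticipate a serious obstacle; the only step that requires a little care is the bookkeeping that links the block $A_{11}$ of $\mathrm{adj}(M) = C^T$ with the top-left $k \times k$ block of the cofactor matrix $C$ itself, so that the determinant on the left-hand side of the final identity is indeed the one appearing in the theorem statement.
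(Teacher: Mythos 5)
Your proof is correct. Note that the paper does not actually prove this statement: it quotes the Jacobi equation as a known fact and refers to Prasolov's linear algebra text, so there is no in-paper argument to compare against. What you give is the standard textbook derivation: read off the block identities $M_{11}A_{11}+M_{12}A_{21}=(\det M)I_k$ and $M_{21}A_{11}+M_{22}A_{21}=0$ from $M\,\mathrm{adj}(M)=(\det M)I_n$, multiply by the block-triangular matrix $B$ to get $(\det M)(\det A_{11})=(\det M)^k\det M_{22}$, and extend from nonsingular $M$ to all $M$ by polynomial density. The two points needing care are both handled: you correctly observe that the top-left $k\times k$ block of $\mathrm{adj}(M)=C^{T}$ is the transpose of $(c_{ij})_{i,j=1,\ldots,k}$, so the determinants agree; and the density step legitimately removes the division by $\det M$ (for real or complex entries, which is all the paper needs, since the edge matrix $E$ is real). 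No gaps.
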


In the next theorem we show some properties of the edge matrix of a hyperbolic tetrahedron that we will need to derive a volume formula.
\begin{theorem}\label{edgematrix}
	Let $E$ be the edge matrix of a compact hyperbolic tetrahedron $T$. Then the following conditions hold %\vspace{-10pt}
	\begin{eqnarray}\label{exi}
	\text{\upshape(i)} &  & c_{ii}>0, \nonumber
	\\ \text{\upshape(ii)} &  & \det E<0, \nonumber
	\\ \text{\upshape(iii)} &  & \cos\theta_{5-i,5-j}=\frac{-c_{ij}}{\sqrt{c_{ii}\cdot c_{jj}}}, \nonumber
	\end{eqnarray}
	where $i,j\in\{1,2,3,4\},\;c_{ij}=(-1)^{i+j}E_{ij}$ is $ij$-cofactor of edge matrix $E$ and $\theta_{5-i,5-j}$ is a dihedral angle along edge $\ell_{5-i,5-j}$ which is opposite to $\ell_{ij}$. 
\end{theorem}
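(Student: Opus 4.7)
The plan is to work in the hyperboloid model of $\mathbb{H}^{3}$, realized as the upper sheet $\{x\in\mathbb{R}^{3,1}:\langle x,x\rangle=-1\}$ of Minkowski space with form $J=\mathrm{diag}(1,1,1,-1)$. Each vertex of $T$ is represented by its position vector $v_i$, and since $\ch\ell_{ij}=-\langle v_i,v_j\rangle$, the whole argument will hinge on the single master identity
\[V^{T} J V = -E,\qquad V=[v_1\,v_2\,v_3\,v_4].\]
Here $V$ is non-singular because the four vertices of a non-degenerate tetrahedron are linearly independent in $\mathbb{R}^{3,1}$.

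Part (ii) falls out by taking determinants: $\det E=\det(-E)=(\det V)^{2}\det J=-(\det V)^{2}<0$. For part (i), I would repeat the construction one dimension lower. The face of $T$ opposite $v_i$ lies in a totally geodesic hyperbolic plane whose ambient $3$-dimensional subspace of $\mathbb{R}^{3,1}$ carries an induced form of signature $(2,1)$, say $J'=\mathrm{diag}(1,1,-1)$. The edge matrix of this triangle is exactly the principal submatrix $\hat E_i$ obtained by deleting the $i$-th row and column of $E$, and the analogous identity $(V')^{T}J'V' = -\hat E_i$ with $\det J'=-1$ gives $c_{ii}=\det\hat E_i=(\det V')^{2}>0$. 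Realizability of each face as a genuine hyperbolic triangle is guaranteed by inequalities (i), (ii) of Theorem~\ref{exist}.

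The substantive work is part (iii). Introduce the Minkowski-dual frame $v^{*}_{k}$ characterized by $\langle v_i,v^{*}_{k}\rangle=\delta_{ik}$, equivalently $V^{*}=JV^{-T}$. Then $v^{*}_{k}$ is orthogonal to every spanning vector of the face opposite $v_k$ and is therefore a normal to that face. Inverting the master identity gives
\[(V^{*})^{T} J V^{*} \,=\, V^{-1} J V^{-T} \,=\, (V^{T} J V)^{-1} \,=\, -E^{-1},\]
so $\langle v^{*}_{i},v^{*}_{j}\rangle = -c_{ij}/\det E$. By parts (i)--(ii) each $v^{*}_{k}$ is spacelike with squared norm $c_{kk}/(-\det E)>0$. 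Setting $\hat n_k=v^{*}_{k}/\sqrt{c_{kk}/(-\det E)}$ and invoking the standard formula $\cos\theta=-\langle\hat n_c,\hat n_d\rangle$ for the interior dihedral angle between two hyperbolic half-planes with outward unit normals $\hat n_c,\hat n_d$, a direct substitution yields
\[\cos\theta_{ab}\,=\, -\frac{c_{cd}}{\sqrt{c_{cc}\,c_{dd}}},\]
where $\{c,d\}=\{1,2,3,4\}\setminus\{a,b\}$. This is the theorem's formula after the relabeling $(c,d)\leftrightarrow(i,j)$, $(a,b)\leftrightarrow(5-i,5-j)$.

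The only genuinely delicate point I expect is orientation: $v^{*}_{k}$ is determined by the defining linear conditions only up to sign, whereas the notion of outward unit normal requires a global choice. Because $\langle v^{*}_{k},v_k\rangle=1>0$, each $v^{*}_{k}$ lies on the same side of its face as the opposite vertex $v_k$, so replacing all four simultaneously by their negatives converts them uniformly into outward normals; the product of the two signs that appears in $\langle\hat n_c,\hat n_d\rangle$ is therefore always $+1$, and the cofactor formula is immune to this ambiguity. Everything else reduces to bookkeeping with determinants and cofactor expansions.
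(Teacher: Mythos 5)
Your proposal is correct, and it takes a genuinely different route from the paper. The paper proves (iii) synthetically: it intersects $T$ with a small sphere centred at vertex $4$, applies the spherical law of cosines to the resulting link triangle (whose angles are $\theta_{14},\theta_{24},\theta_{34}$ and whose sides are the planar angles of the faces at vertex $4$), and then converts those planar angles into edge lengths via the hyperbolic law of cosines; the explicit cofactor expressions for $c_{11},c_{22},c_{12}$ fall out of that computation, and (i) is read off from $\sin^2\angle 243=c_{11}/(\sh^2\ell_{24}\sh^2\ell_{34})>0$. Property (ii) is then \emph{deduced} from (i) and (iii) via the Jacobi identity $c_{11}c_{22}-c_{12}^2=-\det E\,\sh^2\ell_{34}$ together with $\cos^2\theta_{34}<1$. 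You instead work in the hyperboloid model with the vertex Gram identity $V^TJV=-E$, which makes (ii) and (i) immediate signature computations (no Jacobi identity needed), and obtain (iii) from the dual frame of face normals together with the standard relation $\cos\theta=-\langle\hat n_c,\hat n_d\rangle$; your handling of the sign ambiguity of the normals via $\langle v_k^*,v_k\rangle=1>0$ is the right way to close that gap. Your argument is shorter and more conceptual --- it explains \emph{why} cofactors of $E$ appear (they form the Gram matrix of the normal vectors) --- at the cost of importing two standard facts about the Minkowski model ($\ch\ell_{ij}=-\langle v_i,v_j\rangle$ and the normal-vector formula for dihedral angles), whereas the paper's argument is self-contained at the level of the laws of cosines and, importantly, produces the explicit formulas for the cofactors that are reused in the volume computation of Theorem~4. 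One cosmetic remark: your final formula $\cos\theta_{ab}=-c_{cd}/\sqrt{c_{cc}c_{dd}}$ with $\{c,d\}$ the complement of $\{a,b\}$ is the precise statement; the paper's indexing $\theta_{5-i,5-j}$ agrees with it except for the pairs with $i+j=5$, where the complement is not $\{5-i,5-j\}$ --- that is an imprecision in the theorem's notation, not in your proof.
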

\begin{proof}
	Without loss of generality we assume $i=1, j=2$ and show that 
	\begin{equation*}
	\cos\theta_{34}=\frac{-c_{12}}{\sqrt{c_{11}\cdot c_{22}}}.
	\end{equation*}  
	If this is true then by permutation of vertices of $T$ the equation (iii) holds for the remining values of $i$ and $j$.
	
	By straightforward computation we get the following cofactors of matrix $E$
	\begin{equation}\begin{split}\label{eq31}
	-c_{12}=&\;(\ch\ell_{14}\ch\ell_{24}-\ch\ell_{12})\sh^2\ell_{34}\\
	&-(\ch\ell_{14}\ch\ell_{34}-\ch\ell_{13})(\ch\ell_{24}\ch\ell_{34}-\ch\ell_{23}),\\
	c_{11}=&\;\sh^2\ell_{24}\,\sh^2\ell_{34}-(\ch\ell_{24}\ch\ell_{34}-\ch\ell_{23})^2,\\
	c_{22}=&\;\sh^2\ell_{14}\,\sh^2\ell_{34}-(\ch\ell_{14}\ch\ell_{34}-\ch\ell_{13})^2.
	\end{split}\end{equation}
	
	Consider an intersection of the tetrahedron $T$ with a sufficiently small sphere centred at its vertex $4$ (see Fig.~3). 
	\begin{figure}[th]
		\centerline{\psfig{file=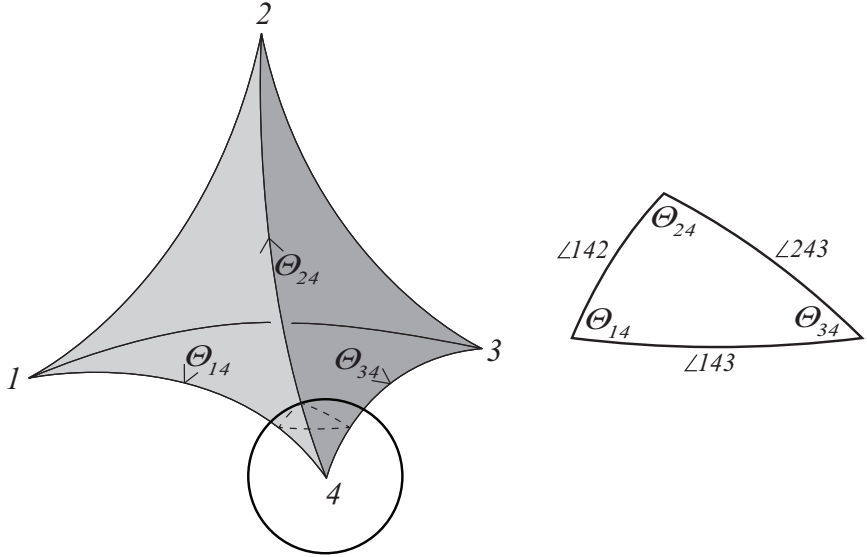,width=4.5in}}
		%\vspace*{8pt}
		\caption{Section of a hyperbolic tetrahedron $T$ by a sphere centered at its vertex.\label{fig1}}
	\end{figure}
	The intersection bounds a spherical triangle whose angles are equal to dihedral angles along the edges of $T$, adjacent to its vertex $4$, namely $\theta_{14}, \theta_{24}$ and $\theta_{34}$. The sides of this spherical triangle have angular measures equal to the angles between corresponding edges of $T$ adjacent to vertex $4$, namely $\angle 243, \angle 143$ and $\angle 142$. By spherical law of cosines for this triangle we have
	\begin{equation}\label{F1}
	\cos\theta_{34}=\frac{\cos\angle 142-\cos\angle 143\,\cos\angle 243}{\sin\angle 143\,\sin\angle 243}.
	\end{equation}
	
	By hyperbolic law of cosines for the triangular faces $1-4-2, 1-4-3, 2-4-3$ of $T$ we have
	\begin{align}
	\cos\angle 142&=\frac{\ch\ell_{14}\,\ch\ell_{24} - \ch\ell_{12}}{\sh\ell_{14}\,\sh\ell_{24}}\,, \nonumber\\
	\cos\angle 143&=\frac{\ch\ell_{14}\,\ch\ell_{34} - \ch\ell_{13}}{\sh\ell_{14}\,\sh\ell_{34}}\,, \label{F3}\\
	\cos\angle 243&=\frac{\ch\ell_{24}\,\ch\ell_{34} - \ch\ell_{23}}{\sh\ell_{24}\,\sh\ell_{34}}\,. \nonumber
	\end{align}
	
	Since the angles $\angle 143, \angle 243$ are less than $\pi$, using (\ref{F3}) and (\ref{eq31}) we obtain
	\begin{equation}\begin{split}\label{eq34}
	\sin\angle 143&=\sqrt{1-\cos^2\angle 143}=\sqrt{\frac{\sh^2\ell_{14}\,\sh^2\ell_{34}-(\ch\ell_{14}\,\ch\ell_{34} - \ch\ell_{13})^2}{\sh^2\ell_{14}\,\sh^2\ell_{34}}}=\\
	&\frac{\sqrt{c_{22}}}{\sh\ell_{14}\,\sh\ell_{34}},\\
	\sin\angle 243&=\sqrt{1-\cos^2\angle 243}=\sqrt{\frac{\sh^2\ell_{24}\,\sh^2\ell_{34}-(\ch\ell_{24}\,\ch\ell_{34} - \ch\ell_{23})^2}{\sh^2\ell_{24}\,\sh^2\ell_{34}}}=\\
	&\frac{\sqrt{c_{11}}}{\sh\ell_{24}\,\sh\ell_{34}}.
	\end{split}\end{equation}
	
	Since $1-\cos^2\angle 243>0$ and $\sh^2\ell_{24}\,\sh^2\ell_{34}>0$, from the latter equations we conclude that $c_{11}>0$. Then by permutation of vertices of $T$ we conclude that inequality (i) $c_{ii}>0$ holds for any $i\in\{1,2,3,4\}$.
	
	Also, from (\ref{F3}) and (\ref{eq31}) we get
	\begin{equation}\begin{split}\label{eq35}
	&\cos\angle 142-\cos\angle 143\,\cos\angle 243=\\
	&\frac{(\ch\ell_{14}\,\ch\ell_{24} - \ch\ell_{12})\sh^2\ell_{34}-(\ch\ell_{14}\,\ch\ell_{34} - \ch\ell_{13})(\ch\ell_{24}\,\ch\ell_{34} - \ch\ell_{23})}{\sh\ell_{14}\,\sh\ell_{24}\,\sh^2\ell_{34}}=\\
	&\frac{-c_{12}}{\sh\ell_{14}\,\sh\ell_{24}\,\sh^2\ell_{34}}.
	\end{split}\end{equation}

	%Placing expressions (\ref{F3}) into (\ref{F1}) we obtain
	%\begin{multline}
	%\cos\theta_{34}=\\
	%\frac{(\ch\ell_{14}\ch\ell_{24}-\ch\ell_{12})(\ch^2\ell_{34}-1)-(\ch\ell_{14}\ch\ell_{34}-\ch\ell_{13})(\ch\ell_{24}\ch\ell_{34}-\ch\ell_{23})}{\sqrt{[(\ch^2\ell_{24}-1)(\ch^2\ell_{34}-1)-(\ch\ell_{24}\ch\ell_{34}-\ch\ell_{23})^2]}}\\
	%\times\frac{1}{\sqrt{[(\ch^2\ell_{14}-1)(\ch^2\ell_{34}-1)-(\ch\ell_{14}\ch\ell_{34}-\ch\ell_{13})^2]}}
	%\end{multline}
	
	Substituting the expressions (\ref{eq34}) and (\ref{eq35}) into (\ref{F1}) we obtain 
	$
	\displaystyle\cos\theta_{34}=\frac{-c_{12}}{\sqrt{c_{11}\cdot c_{22}}},
	$
	which completes the proof of equation (iii).
	
	%Thus we shown that the equation (iii) is true for $i=1$ and $j=2$. Then by permutation of vertices of $T$, the equation (iii) holds for the remaining values of $i$ and $j$.
	
	%Now we show the inequality (ii). Since the angles $\angle 143, \angle 243$ are less than $\pi$ we have $\sin\angle 143\,\sin\angle 243=\sqrt{(1-\cos^2 \alpha_{143})(1- \cos^2 \alpha_{243})}$. Using (\ref{F3}) we get
	
	Now consider the edge matrix $E$. By Jacobi's equation (Theorem~\ref{Jac}, here we set parameter $k=2$) we have
	\begin{equation*}
	c_{22}c_{11}-c^2_{12}=\det E\,(1-\ch\ell^2_{34})=-\det E\,\sh^2\ell_{34}.
	\end{equation*}
	
	As we have shown, $\cos\theta_{34}=\dfrac{-c_{12}}{\sqrt{c_{11}\cdot c_{22}}}$. Let us rewrite this equation, passing to the new variable $\exp (i\theta_{34})$. We obtain a quadratic polynomial
	\begin{equation*}\label{F7}
	\exp^2(i\theta_{34}) +2 \frac{c_{12}}{\sqrt{c_{11}\cdot c_{22}}} \exp(i\theta_{34}) +1 =0.
	\end{equation*}
	By Vieta's formulas, the roots $z_1, z_2$ of this polynomial satisfy the relation $z_1\cdot z_2=1$. Therefore, the quadratic polynomial has two complex conjugate roots and the determinant is less than zero, namely
	\begin{equation}\label{F8}
	\frac{c^2_{21}-c_{11}\,c_{22}}{c_{11}\,c_{22}}<0.
	\end{equation}
	
	As we shown before, $c_{22}\,c_{11}>0$. Then the inequality (\ref{F8}) implies $c^2_{21}-c_{11}\,c_{22} = \det E\,\sh^2\ell_{34}<0$. Hence, $\det E<0$ and (ii) is proved. 
\end{proof}

The most useful formula (iii) form Theorem~\ref{edgematrix} can be also found in \cite{Mur-Ush}.

\section{Volume formula for a tetrahedron $T$ in $\mathbb{H}^3$}
The main result of the present work is the following. 
\begin{theorem}\label{vol}
	Let $T$ be a compact hyperbolic tetrahedron given by its edge matrix $E$ and $c_{ij}=(-1)^{i+j}E_{ij}$ is $ij$-cofactor of $E$. We assume that all the edge lengths are fixed exept $\ell_{34}$ which varies. Then the volume $V=V(T)$ is given by the formula 
	\begin{multline*}
	V=\frac12\int\limits_{\ell_1}^{\ell_{34}}\Bigl[\frac{-t}{\sqrt{-\Delta^3}}\Bigl(\frac{c_{14}(c_{11}c_{23}-c_{12}c_{13})}{c_{11}}+\frac{c_{24}(c_{13}c_{22}-c_{12}c_{23})}{c_{22}}\Bigr)-\frac{\sh t}{\sqrt{-\Delta}}\\
	\times\Bigl(\frac{\ell_{24}\sh\ell_{24}c_{14}+\ell_{14}\sh\ell_{23}c_{13}}{c_{11}}+\frac{\ell_{13}\sh\ell_{13}c_{23}+\ell_{23}\sh\ell_{14}c_{24}}{c_{22}}+\ell_{12}\sh\ell_{12}\Bigr)\Bigr]dt,
	\end{multline*}
	where cofactors $c_{ij}$ and edge matrix determinant $\Delta=\det E$ are functions in one variable $\ell_{34}$ denoted by $t$. The lower limit of integration $\ell_1$ is defined by expression
	\begin{multline*}
	\ch\ell_1=\ch \ell_{13}\ch \ell_{14}-\csch^2\ell_{12}\Bigl[(\ch \ell_{13}\ch \ell_{12}-\ch \ell_{23})(\ch \ell_{14}\ch \ell_{12}-\ch \ell_{24})\\
	-\sqrt{(\ch \ell_{23}-\ch(\ell_{13}+\ell_{12}))(\ch \ell_{23}-\ch(\ell_{13}-\ell_{12}))}\\
	\times\sqrt{(\ch \ell_{24}-\ch(\ell_{14}+\ell_{12}))(\ch \ell_{24}-\ch(\ell_{14}-\ell_{12}))}\Bigr].
	\end{multline*}
\end{theorem}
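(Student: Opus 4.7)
The plan is to derive the volume formula from the Schl\"afli differential identity for hyperbolic 3-space,
\[
dV\;=\;-\tfrac12\sum_{1\le p<q\le 4}\ell_{pq}\,d\theta_{pq},
\]
restricted to the one-parameter family of tetrahedra in which the five edges $\ell_{12},\ell_{13},\ell_{14},\ell_{23},\ell_{24}$ are frozen and only $\ell_{34}=t$ varies. Along this family every dihedral angle becomes a smooth function of $t$, so
\[
\frac{dV}{dt}\;=\;-\tfrac12\sum_{p<q}\ell_{pq}\,\frac{d\theta_{pq}}{dt}.
\]
Integrating from $t=\ell_1$, where the construction supplied by Theorem~\ref{exist} yields a flat degenerate tetrahedron (the dihedral angle $\theta_{12}$ collapses to $0$) so that $V(\ell_1)=0$, up to $t=\ell_{34}$, will produce an integral representation of $V(T)$ of the desired type.

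To compute $d\theta_{pq}/dt$ I would differentiate implicitly the cosine rule of Theorem~\ref{edgematrix}(iii), namely $\cos\theta_{5-i,5-j}=-c_{ij}/\sqrt{c_{ii}c_{jj}}$. The value of $\sin\theta_{pq}$ needed to isolate $d\theta_{pq}/dt$ is supplied by Theorem~\ref{Jac} with $k=2$ applied to the pair of rows and columns complementary to $\{p,q\}$: this yields the identity $c_{ii}c_{jj}-c_{ij}^{2}=-\Delta\,\sh^{2}\ell_{pq}$ and hence
\[
\sin\theta_{pq}\;=\;\frac{\sqrt{-\Delta}\,\sh\ell_{pq}}{\sqrt{c_{ii}c_{jj}}},\qquad i=5-p,\;j=5-q.
\]
This is exactly the source of the factors $\sqrt{-\Delta}$ and $\sqrt{-\Delta^{3}}$ appearing in the denominators of the integrand in Theorem~\ref{vol}, the latter coming from the combination of $1/\sin\theta_{pq}$ with the derivative of $1/\sqrt{c_{ii}c_{jj}}$.

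The remaining ingredient is the $t$-dependence of $\Delta$ and of the cofactors. Because only the symmetric pair of entries $E_{34}=E_{43}=\ch t$ of the edge matrix depends on $t$, cofactor expansion gives the clean formula $d\Delta/dt=2c_{34}\sh t$, while each $dc_{ij}/dt$ is $\sh t$ times an explicit $3\times 3$ minor of $E$ (in particular $c_{33}$ and $c_{44}$ are constant in $t$). Applying the general form of Theorem~\ref{Jac} (the Desnanot--Jacobi identity) to the appropriate deletions, these $3\times 3$ minors can in turn be rewritten as differences of pairwise products of cofactors, producing exactly the combinations $c_{11}c_{23}-c_{12}c_{13}$ and $c_{13}c_{22}-c_{12}c_{23}$ that already appear in the statement.

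The step I expect to be the main obstacle is the purely algebraic bookkeeping needed to collect the six Schl\"afli summands into the two structurally distinct pieces of the final integrand: the part proportional to $-t/\sqrt{-\Delta^{3}}$, which gathers the contributions in which the extra factor of $\sqrt{-\Delta}$ emerges from differentiating $1/\sqrt{c_{ii}c_{jj}}$, and the part proportional to $-\sh t/\sqrt{-\Delta}$, which gathers the terms in which the $\sh t$ factor from $d(\ch t)/dt$ pairs with one of the five fixed edge lengths in the Schl\"afli sum to yield products of the form $\ell_{pq}\sh\ell_{pq}\,c_{ij}$. Once this regrouping is carried out and the boundary condition $V(\ell_{1})=0$ from Theorem~\ref{exist} is invoked, Theorem~\ref{vol} follows by a single integration in $t$.
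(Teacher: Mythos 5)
Your proposal follows essentially the same route as the paper: Schl\"afli's identity gives $\partial V/\partial\theta_{pq}=-\ell_{pq}/2$, the cosine rule of Theorem~\ref{edgematrix}(iii) is differentiated implicitly in $t=\ell_{34}$, the Jacobi (Desnanot--Jacobi) identity with $k=2$ supplies both $\sin\theta_{pq}=\sqrt{-\Delta}\,\sh\ell_{pq}/\sqrt{c_{ii}c_{jj}}$ and the cofactor combinations in the integrand, and one integrates from the degenerate configuration $t=\ell_1$ where $V=0$. The only part you leave as a plan rather than carry out is the final algebraic regrouping of the six summands, which is exactly the computation the paper performs explicitly, so the approach is correct and matches the published proof.
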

\begin{proof}
	A hyperbolic tetrahedron $T$ can be defined up to isometry by a set of dihedral angles (see, e.g., \cite{Vinberg}). According to Theorem \ref{edgematrix} the dihedral angles of $T$ are uniquely determined by its edge lengths
	\begin{equation}\label{F11}
	\cos\theta_{5-i,5-j}=\dfrac{-c_{ij}}{\sqrt{c_{ii}\cdot c_{jj}}}.
	\end{equation}
	We differentiate the volume of $T$ by $\ell_{34}$ as a composite function
	\begin{equation}\begin{split}\label{derivative}
	\frac{\partial V}{\partial \ell_{34}}=&\;\frac{\partial V}{\partial \theta_{14}}\frac{\partial \theta_{14}}{\partial \ell_{34}} + \frac{\partial V}{\partial \theta_{34}}\frac{\partial \theta_{34}}{\partial \ell_{34}} + \frac{\partial V}{\partial \theta_{24}}\frac{\partial \theta_{24}}{\partial \ell_{34}}\\
	&+\frac{\partial V}{\partial \theta_{13}}\frac{\partial \theta_{13}}{\partial \ell_{34}}+\frac{\partial V}{\partial \theta_{23}}\frac{\partial \theta_{23}}{\partial \ell_{34}}+\frac{\partial V}{\partial \theta_{12}}\frac{\partial \theta_{12}}{\partial \ell_{34}}.
	\end{split}\end{equation}
	
	By the Schl\"afli formula (see, e.g., \cite{Vinberg}, Ch.~7, Sect.~2.2), we have	
	\begin{equation*}\begin{split}\label{F9}
	d V=&-\sum_{ij}\frac{\ell_{ij}}{2}\,d\theta_{ij}=\\
	&-\frac{1}{2}(\ell_{14}\,d\theta_{14}\,+\,\ell_{34}\,d\theta_{34}\,+\,\ell_{24}\,d\theta_{24}\,+\,\ell_{13}\,d\theta_{13}\,+\,\ell_{23}\,d\theta_{23}\,+\,\ell_{12}\,d\theta_{12}),
	\end{split}\end{equation*} 
	where the sum is taken over all edges of $T$. Consequently, 
	\begin{equation}\label{F10}
	\dfrac{\partial V}{\partial \theta_{ij}}=-\frac{\ell_{ij}}{2}.
	\end{equation} 
	
	Applying Jacobi's equation (Theorem~\ref{Jac}) to matrix $E$ (we set parameter $k=2$) we obtain the following identities	
	\begin{equation}\begin{split}\label{Jac-rel}
	%c_{ii}c_{jj}-c^2_{ij}&=\Delta (1-\ch^2 \ell_{5-i,5-j})\\
	%&=-\Delta \sh^2 \ell_{5-i,5-j},\;where\;i,j\in\{1,2,3,4\}\\
	c_{11}c_{22}-c^2_{12}&=\Delta (1-\text{ch}^2 \ell_{34})=-\Delta \text{sh}^2 \ell_{34},\\
	c_{11}c_{33}-c^2_{13}&=\Delta (1-\text{ch}^2 \ell_{24})=-\Delta \text{sh}^2 \ell_{24},\\
	c_{22}c_{33}-c^2_{23}&=\Delta (1-\text{ch}^2 \ell_{14})=-\Delta \text{sh}^2 \ell_{14},\\
	c_{33}c_{44}-c^2_{34}&=\Delta (1-\text{ch}^2 \ell_{12})=-\Delta \text{sh}^2 \ell_{12},\\
	c_{22}c_{44}-c^2_{24}&=\Delta (1-\text{ch}^2 \ell_{13})=-\Delta \text{sh}^2 \ell_{13},\\
	c_{11}c_{44}-c^2_{14}&=\Delta (1-\text{ch}^2 \ell_{23})=-\Delta \text{sh}^2 \ell_{23},\\
	c_{14}c_{23}-c_{12}c_{34}&=\Delta (\ch \ell_{14}\ch \ell_{23}-\ch \ell_{12}\ch \ell_{34}),\\
	c_{13}c_{24}-c_{12}c_{34}&=\Delta (\ch \ell_{13}\ch \ell_{24}-\ch \ell_{12}\ch \ell_{34}),\\
	c_{13}c_{44}-c_{14}c_{34}&=-\Delta (\ch \ell_{13}-\ch \ell_{12}\ch \ell_{23}),\\
	c_{13}c_{14}-c_{11}c_{34}&=\Delta (\ch \ell_{34}-\ch \ell_{23}\ch \ell_{24}),\\
	c_{33}c_{14}-c_{34}c_{13}&=-\Delta (\ch \ell_{14}-\ch \ell_{12}\ch \ell_{24}),\\
	c_{23}c_{24}-c_{34}c_{22}&=\Delta (\ch \ell_{34}-\ch \ell_{13}\ch \ell_{14}),\\
	c_{23}c_{44}-c_{24}c_{34}&=-\Delta (\ch \ell_{23}-\ch \ell_{12}\ch \ell_{13}),\\
	c_{33}c_{24}-c_{34}c_{23}&=-\Delta (\ch \ell_{24}-\ch \ell_{12}\ch \ell_{14}).
	\end{split}\end{equation}
	
	Note that $\sin\theta_{5-i,5-j}\geq 0,\; i,j\in\{1,2,3,4\}$. Using (\ref{F11}) we find a partial derivative of a dihedral angle by variable
	$\ell_{34}$
	\begin{equation}\begin{split}\label{eq45}
	\frac{\partial\theta_{5-i,5-j}}{\partial\ell_{34}}=&-\frac{\sh\ell_{34}}{\sin\theta_{5-i,5-j}}\,\frac{\partial\cos\theta_{5-i,5-j}}{\partial\ch\ell_{34}}=-\frac{\sh\ell_{34}}{ \sqrt{1-\cos^2\theta_{5-i,5-j}} }\,\frac{\partial\cos\theta_{5-i,5-j}}{\partial\ch\ell_{34}}=\\
	&-\frac{\sh\ell_{34}\sqrt{c_{ii}\,c_{jj}}}{\sqrt{c_{ii}\,c_{jj}-c_{ij}^2}}\,\frac{\partial\cos\theta_{5-i,5-j}}{\partial\ch\ell_{34}}%=-\frac{\sh\ell_{34}}{\sh\ell_{5-i,5-j}}\sqrt{\frac{c_{ii}\,c_{jj}}{-\Delta}}\,\frac{\partial\cos\theta_{5-i,5-j}}{\partial\ch\ell_{34}}.
	\end{split}\end{equation}
	
	Considering each cofactor of matrix $E$ as a function of variable $\ell_{34}$ we calculate the partial derivatives of these functions
	\begin{equation}\begin{split}\label{eq46}
	\frac{\partial c_{13}}{\partial\ell_{34}}&=\ch\ell_{14}-\ch\ell_{12}\ch\ell_{24},\quad\frac{\partial c_{11}}{\partial\ell_{34}}=2(\ch\ell_{23}\ch\ell_{24}-\ch\ell_{34}),\\
	\frac{\partial c_{14}}{\partial\ell_{34}}&=\ch\ell_{13}-\ch\ell_{12}\ch\ell_{23},\quad\frac{\partial c_{22}}{\partial\ell_{34}}=2(\ch\ell_{13}\ch\ell_{14}-\ch\ell_{34}),\\
	\frac{\partial c_{23}}{\partial\ell_{34}}&=\ch\ell_{24}-\ch\ell_{12}\ch\ell_{14},\quad\frac{\partial c_{33}}{\partial\ell_{34}}=0,\\
	\frac{\partial c_{24}}{\partial\ell_{34}}&=\ch\ell_{23}-\ch\ell_{12}\ch\ell_{13},\quad\frac{\partial c_{44}}{\partial\ell_{34}}=0,\\
	\frac{\partial c_{34}}{\partial\ell_{34}}&=\ch^2\ell_{14}-1=\sh^2\ell_{14},\quad\frac{\partial c_{12}}{\partial\ell_{34}}=2\ch\ell_{12}\ch\ell_{34}-\ch\ell_{13}\ch\ell_{24}-\ch\ell_{14}\ch\ell_{23}.
	\end{split}\end{equation}
	
	We substitute $\cos\theta_{5-i,5-j}$ in (\ref{eq45}) by formula (\ref{F11}), differentiate it and use expressions (\ref{eq46}) for cofactors derivatives. By straightforward computation we obtain 	
	\begin{equation*}\begin{split}
	\frac{\partial \theta_{14}}{\partial \ell_{34}}=&\,\frac{c_{44}(c_{11}(\ch\ell_{13}-\ch\ell_{12}\ch\ell_{23})+c_{14}(\ch\ell_{34}-\ch\ell_{23}\ch\ell_{24}))}{c_{44}c_{11}\sqrt{-\Delta}}\,\frac{\sh\ell_{34}}{\sh\ell_{14}},\\
	\frac{\partial \theta_{24}}{\partial \ell_{34}}=&\,\frac{c_{33}(c_{11}(\ch\ell_{14}-\ch\ell_{12}\ch\ell_{24})+c_{13}(\ch\ell_{34}-\ch\ell_{23}\ch\ell_{24}))}{c_{33}c_{11}\sqrt{-\Delta}}\,\frac{\sh\ell_{34}}{\sh\ell_{24}},\\
	\frac{\partial \theta_{34}}{\partial \ell_{34}}=&\,\frac{(2\ch\ell_{12}\ch\ell_{34}-\ch\ell_{14}\ch\ell_{23}-\ch\ell_{13}\ch\ell_{24})c_{22}c_{11} }{c_{11}c_{22}\sqrt{-\Delta}}\\
	&  -\frac{c_{12}(c_{11}(\ch\ell_{13}\ch\ell_{14}-\ch\ell_{34})+c_{22}(\ch\ell_{23}\ch\ell_{24}-\ch\ell_{34}))}{c_{11}c_{22}\sqrt{-\Delta}},\\
	\frac{\partial \theta_{13}}{\partial \ell_{34}}=&\,\frac{c_{44}(c_{22}(\ch\ell_{23}-\ch\ell_{12}\ch\ell_{13})+c_{24}(\ch\ell_{34}-\ch\ell_{13}\ch\ell_{14}))}{c_{44}c_{22}\sqrt{-\Delta}}\,\frac{\sh\ell_{34}}{\sh\ell_{13}},\\
	\frac{\partial \theta_{23}}{\partial \ell_{34}}=&\,\frac{c_{33}(c_{22}(\ch\ell_{24}-\ch\ell_{12}\ch\ell_{14})+c_{23}(\ch\ell_{34}-\ch\ell_{13}\ch\ell_{14}))}{c_{33}c_{22}\sqrt{-\Delta}}\,\frac{\sh\ell_{34}}{\sh\ell_{23}},\\
	\frac{\partial \theta_{12}}{\partial \ell_{34}}=&\,\frac{\ch\ell^2_{12}-1}{\sqrt{-\Delta}}\,\frac{\sh\ell_{34}}{\sh\ell_{12}}.
	\end{split}\end{equation*} 
	
	We rewrite these partial derivatives using relations (\ref{Jac-rel})
	
	\begin{equation}\begin{split}\label{PD}
	\frac{\partial \theta_{14}}{\partial \ell_{34}}&=\frac{c_{13}\sh \ell_{23}\sh\ell_{34} }{c_{11}\sqrt{-\Delta}},\\
	\frac{\partial \theta_{24}}{\partial \ell_{34}}&=\frac{c_{14}\sh \ell_{24}\sh\ell_{34} }{c_{11}\sqrt{-\Delta}},\\
	\frac{\partial \theta_{34}}{\partial \ell_{34}}&=\Bigl[\frac{c_{14}(c_{11}c_{23}-c_{12}c_{13})}{c_{11}}+\frac{c_{24}(c_{13}c_{22}-c_{12}c_{23})}{c_{22}}\Bigr]\frac{1}{\sqrt{-\Delta^3}},\\
	\frac{\partial \theta_{13}}{\partial \ell_{34}}&=\frac{c_{23}\sh \ell_{13}\sh\ell_{34} }{c_{22}\sqrt{-\Delta}},\\
	\frac{\partial \theta_{23}}{\partial \ell_{34}}&=\frac{c_{24}\sh \ell_{14}\sh\ell_{34} }{c_{22}\sqrt{-\Delta}},\\
	\frac{\partial \theta_{12}}{\partial \ell_{34}}&=\frac{\sh \ell_{12}\sh \ell_{34} }{\sqrt{-\Delta}}.
	\end{split}\end{equation} 
	
	Substituting the partial derivatives (\ref{F10}) and (\ref{PD}) into (\ref{derivative}) we obtain
	
	\begin{multline*}
	\frac{\partial V}{\partial \ell_{34}}= \frac{1}{2}\Bigl[ \frac{-\ell_{34}}{\sqrt{-\Delta^3}}\Bigl(\frac{c_{14}(c_{11}c_{23}-c_{12}c_{13})}{c_{11}}+\frac{c_{24}(c_{13}c_{22}-c_{12}c_{23})}{c_{22}}\Bigl)-\frac{\sh\ell_{34}}{\sqrt{-\Delta}}\\
	\times\Bigl(\frac{\ell_{24}\sh\ell_{24}c_{14}+\ell_{14}\sh\ell_{23}c_{13}}{c_{11}}+\frac{\ell_{13}\sh\ell_{13}c_{23}+\ell_{23}\sh\ell_{14}c_{24}}{c_{22}}+\ell_{12}\sh\ell_{12}\Bigr) \Bigr].
	\end{multline*}
	
	Now we assume that all the edge lengths of $T$ are fixed except $\ell_{34}$ which varies. According to the Theorem~\ref{exist} we have $\ell_{34} \in [\ell_1,\ell_2]$.  Note that the volume $V=0$ when $\ell_{34}=\ell_1$. So we can obtain the volume of $T$ by integrating the function $\displaystyle\frac{\partial V}{\partial \ell_{34}}$ of variable $\ell_{34}$ from $\ell_1$ to a necessary value of $\ell_{34}$. For distinguishing between variable of integration with its limits we denote the variable $\ell_{34}$ by $t$. Thus we obtain the volume formula for the hyperbolic tetrahedron $T$ as in the statement of this theorem.
\end{proof}

Now let us put every edge length to be equal $\ell_{ij}=a$. Then form Theorem~\ref{vol} we have the following.

\begin{corollary}
	Let $T$ be a regular hyperbolic tetrahedron and all of its edge lengths are equal to $a,\; a\geq0$. Then the volume $V=V(T)$ is given by the formula
	\begin{align*}
	V&=\frac{1}{2}\int\limits_{0}^{a} \frac{(A-B)dt}{C\sqrt{D}},\quad where\\
	A&=2t\ch^2 a \sqrt{(\ch a-1)(\ch t -1)},\\
	B&=a(1-4 \ch a 2+ \ch^2 a+\ch t) \sqrt{(\ch a+1)(\ch t +1)},\\
	C&=(1- \ch^2 a + \ch t),\\
	D&=4\ch^2 a-\ch a -1 -\ch t - \ch a \ch t.
	\end{align*}
\end{corollary}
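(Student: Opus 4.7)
The plan is to specialize Theorem~\ref{vol} to the symmetric setting $\ell_{12}=\ell_{13}=\ell_{14}=\ell_{23}=\ell_{24}=a$ with $\ell_{34}=t$ varying, exploiting the $\mathbb{Z}_2\times\mathbb{Z}_2$ symmetry of the resulting edge matrix under the swaps $1\leftrightarrow 2$ and $3\leftrightarrow 4$. Conceptually the argument is a direct substitution into Theorem~\ref{vol} followed by algebraic simplification; the main obstacle will be the amount of cancellation needed to bring the integrand into the compact form stated.

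First I would show $\ell_1=0$. Plugging $\ell_{12}=\ldots=\ell_{24}=a$ into the formula of Theorem~\ref{exist} and using $\sh^2 a=(\ch a-1)(\ch a+1)$, the expression $C$ collapses to $2\ch^2 a/(\ch a+1)$, while both radicals in $S$ evaluate to $(\ch a-1)\sqrt{2\ch a+1}$, so $S=(\ch a-1)(2\ch a+1)/(\ch a+1)$. A direct subtraction gives $\ch\ell_1=C-S=1$, hence $\ell_1=0$, which is the lower limit appearing in the claim.

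Next, the symmetries immediately force $c_{11}=c_{22}$, $c_{33}=c_{44}$, and $c_{13}=c_{14}=c_{23}=c_{24}$. Setting $u=\ch a$ and $v=\ch t$, expanding the $3\times 3$ minors yields the explicit polynomial forms $c_{11}=(v-1)(2u^2-1-v)$, $c_{12}=u(v-1)(1+v-2u)$, $c_{13}=-u(u-1)(v-1)$, together with the identity $c_{11}-c_{12}=(v-1)D$, where $D=4u^2-u-1-v(u+1)$ is exactly the quantity in the statement. Jacobi's identity (Theorem~\ref{Jac}) applied to the $\{1,2\}$ block then yields $-\Delta=(u-1)(v-1)D$.

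Substituting these into the bracket of Theorem~\ref{vol} collapses the first cofactor combination to $2c_{13}^2(c_{11}-c_{12})/c_{11}$ and the second to $a\sh a\,(4c_{13}+c_{11})/c_{11}$. Using $\sh a=\sqrt{(\ch a-1)(\ch a+1)}$, $\sh t=\sqrt{(\ch t-1)(\ch t+1)}$ together with the closed form of $-\Delta$, the factors $1/\sqrt{-\Delta^3}$ and $1/\sqrt{-\Delta}$ cancel against the cofactors and produce exactly $\sqrt{(\ch a-1)(\ch t-1)}$ in the first term and $\sqrt{(\ch a+1)(\ch t+1)}$ in the second, both over a common rational denominator times $\sqrt{D}$. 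Integrating from $0$ to $a$ and multiplying by $\tfrac{1}{2}$ gives the stated formula. The only genuine difficulty is the bookkeeping of the $(u\pm 1)$ and $(v\pm 1)$ factors needed to make these two square roots emerge cleanly over the same denominator $C\sqrt{D}$.
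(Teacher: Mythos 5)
Your proposal is correct and takes exactly the route the paper itself uses (the corollary is obtained by direct substitution of $\ell_{ij}=a$, $\ell_{34}=t$ into Theorem~\ref{vol}, with $\ell_1=0$ coming from Theorem~\ref{exist}); your cofactor values $c_{11}=(v-1)(2u^2-1-v)$, $c_{12}=u(v-1)(1+v-2u)$, $c_{13}=-u(u-1)(v-1)$, the identity $-\Delta=(u-1)(v-1)D$, and the two collapsed bracket terms all check out. Note that carrying your computation through actually gives $C=1+\ch t-2\ch^2 a$ and the polynomial $1-4\ch a+2\ch^2 a+\ch t$ in $B$, which reveals typographical errors in the printed statement of the corollary rather than any flaw in your argument.
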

This coincides with the result of \cite{AbrVuo2017} (Theorem~1).

\appendix

\section*{Acknowledgments}
The authors are grateful to Alexander Mednykh for useful remarks and comments. This work was supported by the Ministry of Science and Higher Education of Russia (agreement No. ~075-02-2020-1479/1).

\end{document}